\date{}
\theoremstyle{plain}
\newtheorem{theorem}{Theorem}
\newtheorem{corollary}{Corollary}
\newtheorem{proposition}{Proposition}
\newtheorem{rem}{Remark}
\theoremstyle{definition}
\theoremstyle{remark}
\def\N{{\mathbb N}}
\def\Q{{\mathbb Q}}
\title{Signature spectrum of positive braids}
\author{Sebastian Baader}
\begin{document}

\begin{abstract} We derive a lower bound for all Levine-Tristram signatures of positive braid links, linear in terms of the first Betti number. As a consequence, we obtain upper and lower bounds on the ratio of fixed pairs of Levine-Tristram signature invariants, valid uniformly on all positive braid monoids.
\end{abstract}


\maketitle

\section{Introduction}

Around 40 years ago, Rudolph showed that knotted positive braid links have strictly positive signature invariant, and concluded that these links are not slice~\cite{R}. Feller improved the strict positivity to a lower bound which is linear in the first Betti number $b_1(\beta)$ of the canonical Seifert surface associated with a positive braid $\beta$; the best known published lower bound is $\sigma(\beta) \geq \frac{1}{8} b_1(\beta)$ (see~\cite{F1} for this bound, and~\cite{F2} for the original bound, both by Feller). In this note, we derive a linear lower bound for the family of Levine-Tristram signature invariants $\sigma_\omega(L)$, parametrised by $\omega \in S^1$, defined from any Seifert matrix $V$ of a link $L$ as the signature of the Hermitian form
$$M_\omega=(1-\omega)V+(1-\bar{\omega})V^T.$$
For the class of links considered here -- closures of non-split positive braids -- this form is non-degenerate, provided $\omega$ is not an algebraic number, i.e. $\omega \in S^1 \setminus \bar{\Q}$. In particular, the signature is just the difference of the numbers of positive and negative eigenvalues of the matrix $M_\omega$
(see~\cite{L,Tri} for the original definition, and~\cite{C} for an excellent survey on the Levine-Tristram signature invariants).
We use the notation $B_n^+$ for the monoid of positive braids on $n$ strands, and $\sigma_\omega(\beta)$ for the Levine-Tristram signature invariant of the closure of a braid $\beta \in B_n^+$.

\begin{theorem}
\label{linear}
For all $\omega \in S^1 \setminus \bar{\Q}$, there exists a constant $c>0$, such that for all $n \in \N$:
$$\liminf \left\{\frac{\sigma_{\omega}(\beta)}{b_1(\beta)} \middle| \beta \in B_n^+ \right\}\geq c.$$
\end{theorem}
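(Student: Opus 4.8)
The plan is to realise $\sigma_\omega(\beta)$ as the signature of the Hermitian form $M_\omega$ on $H_1$ of the Bennequin surface $\Sigma$ of $\beta$, and to estimate its inertia from the combinatorial structure of $\Sigma$. Writing $\omega=e^{i\theta}$, I first split
$$M_\omega=(1-\cos\theta)(V+V^T)-i\sin\theta\,(V-V^T),$$
the real part being the scaled symmetrised Seifert form (whose signature is the classical invariant bounded by Feller) and the imaginary part the scaled intersection form of $\Sigma$. I then fix the standard basis of $H_1(\Sigma)$ attached to the presentation of $\Sigma$ as $n$ disks joined by the $\ell$ bands of the braid word: for each gap $i\in\{1,\dots,n-1\}$ carrying $a_i$ occurrences of $\sigma_i$, the $a_i-1$ cycles supported there form a linear chain whose principal block in $M_\omega$ is \emph{exactly} the Seifert form of the torus link $T(2,a_i)$, since the linkings among these cycles involve only the disks $i,i+1$ and their bands. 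Here $b_1(\beta)=\sum_i(a_i-1)=\ell-n+1$.

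Second, I record the inertia of a single chain. The $T(2,a)$ block is tridiagonal Toeplitz, with diagonal $-2(1-\cos\theta)$ and off-diagonal $1-\omega$, so with $s=|\sin(\theta/2)|$ its eigenvalues are $4s\bigl(\cos\tfrac{m\pi}{a}-s\bigr)$, $m=1,\dots,a-1$. Transcendence of $\omega$ forces $s$ transcendental, so no eigenvalue vanishes (this is the non-degeneracy noted in the introduction), and counting signs gives a surplus of eigenvalues of one fixed sign of magnitude $d(\theta)(a-1)+O(1)$, where $d(\theta)=1-\tfrac{2}{\pi}\arccos s>0$ for every $\omega\neq1$. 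Were $M_\omega$ block diagonal in these chains --- which happens precisely when generators of adjacent index never interleave --- summation would give $\sigma_\omega(\beta)=\pm d(\theta)\,b_1(\beta)+O(n)$ (the global sign fixed by Rudolph's positivity), and the theorem would follow with any $c<d(\theta)$.

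The main obstacle is the coupling between chains of adjacent gaps $i$ and $i+1$, which share the disk $i+1$. As $\sigma_i$ and $\sigma_j$ commute for $|i-j|\ge2$, these are the \emph{only} off-block entries; but their number is in general comparable to $b_1(\beta)$, so a crude interlacing or rank bound on the signature defect is hopelessly lossy --- it can cancel the entire surplus. I would instead induct on $n$, peeling off the last gap and applying Haynsworth inertia additivity: writing $M_\omega=\left(\begin{smallmatrix}A&B\\B^{*}&C\end{smallmatrix}\right)$ with $C$ the final $T(2,a_{n-1})$ block and $A$ the form of the braid $\beta'\in B_{n-1}^+$ obtained by deleting all $\sigma_{n-1}$, one has $\sigma(M_\omega)=\sigma(C)+\sigma\bigl(A-BC^{-1}B^{*}\bigr)$. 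The crux is to show that the Schur correction $-BC^{-1}B^{*}$ cannot reverse more than a controlled fraction of the accumulated surplus, \emph{uniformly in $n$ and in the $a_i$}. This is exactly where the positivity of $\beta$ must be used essentially: because all bands are coherently oriented, the coupling $B$ has a rigid sign pattern relative to $C$, and I expect to prove that $A-BC^{-1}B^{*}$ still carries a definite subspace (of the sign of $\sigma_\omega$) of dimension at least $\tfrac{1+c}{2}$ times its size, with $c>0$ absolute.

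Granting such a uniform one-sided estimate for the coupling of two adjacent chains, the induction delivers $\sigma_\omega(\beta)\ge c(\omega)\,b_1(\beta)$ for all $\beta\in B_n^+$ of sufficiently large Betti number, simultaneously for every $n$; the finitely many braids of small $b_1$ do not affect the liminf. I expect this coupling estimate to carry the whole difficulty: it is the step that upgrades the classical case $\omega=-1$, where $d(\pi)=1$ makes each chain block outright definite, to the full circle of irrational $\omega$, where the blocks are only definite up to the surplus $d(\theta)$ and the imaginary intersection term genuinely mixes them.
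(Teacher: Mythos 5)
Your setup is correct as far as it goes --- the chain decomposition of the Bennequin surface, the identification of each gap's principal block with the Seifert form of $T(2,a_i)$, the eigenvalue count $4s\bigl(\cos\frac{m\pi}{a}-s\bigr)$, and the Haynsworth identity are all fine --- but the proof has a genuine gap precisely at the step you yourself call ``the crux'': the assertion that the Schur complement $A-BC^{-1}B^{*}$ retains a definite subspace of proportion $\frac{1+c}{2}$ is never proved, only announced (``I expect to prove''), and it is not a technical verification but the entire content of the theorem. Worse, as literally stated (with $c>0$ absolute) the assertion is false. Take $\beta=\sigma_1^N\sigma_2^2\in B_3^+$: here the two chains do not interleave, so $B=0$ and the Schur complement is the $T(2,N)$ block itself, whose largest definite subspace has proportion $\frac{1+d(\theta)}{2}+o(1)$; since $d(\theta)\to 0$ as $\omega\to 1$, no $\omega$-independent $c>0$ can work, and indeed $\sigma_\omega(\beta)/b_1(\beta)\to d(\theta)$ shows the constant in the theorem must degenerate as $\omega\to 1$. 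Even with $c$ allowed to depend on $\omega$, the coupling loss is genuinely large rather than a small correction: for $\beta_n=(\sigma_1^2\sigma_2^2)^n$ at $\omega=-1$ each chain block is outright definite ($d(\pi)=1$), yet $\sigma(\beta_n)/b_1(\beta_n)\to\frac{1}{2}$, so the off-block terms destroy fully half of the block-diagonal surplus. The ``rigid sign pattern'' of $B$ coming from positivity is a heuristic, not a mechanism; nothing in your outline bounds what fraction of the surplus survives, uniformly in $n$ and the word, and producing such a bound is exactly the theorem.

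It is worth noting that the paper never confronts this coupling problem head-on; it goes around it. For $\mathrm{Re}(\omega)>-\frac{1}{2}$ it invokes Gambaudo--Ghys: on each fixed $B_k$ the function $\theta\mapsto\sigma_{e^{2\pi i\theta}}(\beta)$ is linear with slope $2\,\mathrm{lk}(\beta)$ up to an error $k-1$ on $(0,\frac{1}{k})$, which gives a ratio bound $>\frac{2}{k+1}$ on $B_k^+$; Feller's smoothing argument (delete at most a $\frac{1}{k}$ fraction of crossings to land in a union of $k$-braids) then transports the bound to every $B_n^+$. For $\mathrm{Re}(\omega)<-\frac{1}{2}$ it works entirely inside $B_3^+$, using the Garside normal form to show every positive $3$-braid is built from a bounded core by inserting powers $\sigma_k^m$ with $m\geq 4$, and then tracks how each such insertion moves the pair $(\sigma_\omega,b_1)$ --- a one-dimensional bookkeeping that replaces your inertia induction --- before again applying Feller's reduction. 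If you want to salvage your approach, you would need, at minimum, a quantitative and $\omega$-dependent replacement for the Schur-complement claim, and the examples above show it cannot take the simple uniform form you propose.
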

 
The limit inferior is unavoidable, since the piecewise constant signature function of a knot always starts off with a zero segment. While each individual of these bounds may not appear that impressive, we would like to record the following consequence of Theorem~\ref{linear} and the inequalities $\sigma_\omega(\beta) \leq 2b_1(\beta)$, valid for all $\omega \in S^1$, and for all positive braids $\beta$.

\begin{corollary}
Let $\omega_1,\omega_2 \in S^1 \setminus \bar{\Q}$. There exist constants $0<a<b$, so that the following inequalities hold for all $n \geq 2$:
$$a \leq \liminf \left\{\frac{\sigma_{\omega_1}(\beta)}{\sigma_{\omega_2}(\beta)} \middle| \beta \in B_n^+\right\} \leq \limsup
\left\{\frac{\sigma_{\omega_1}(\beta)}{\sigma_{\omega_2}(\beta)} \middle| \beta \in B_n^+\right\}
\leq b.$$
\end{corollary}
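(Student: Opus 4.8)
The plan is to deduce the corollary directly from Theorem~\ref{linear} together with the universal upper bound $\sigma_\omega(\beta) \le 2 b_1(\beta)$, by expressing the ratio of the two signatures as a ratio of their normalisations by the first Betti number. The central inequality $\liminf \le \limsup$ is automatic, so the work splits into two symmetric halves: a uniform positive lower bound for the limit inferior, and a uniform finite upper bound for the limit superior.

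First I would apply Theorem~\ref{linear} to $\omega_1$ and to $\omega_2$ separately, obtaining constants $c_1, c_2 > 0$, \emph{independent of} $n$, such that for every $n$ and $i \in \{1,2\}$ one has $\liminf\{\sigma_{\omega_i}(\beta)/b_1(\beta) \mid \beta \in B_n^+\} \ge c_i$. Together with the bound $\sigma_{\omega_i}(\beta) \le 2 b_1(\beta)$ this confines the normalised signatures: for each $\epsilon > 0$ there is an $N$ so that every $\beta \in B_n^+$ with $b_1(\beta) \ge N$ satisfies $c_i - \epsilon \le \sigma_{\omega_i}(\beta)/b_1(\beta) \le 2$. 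In particular $\sigma_{\omega_2}(\beta) > 0$ once $b_1(\beta)$ is large, so the quotient $\sigma_{\omega_1}(\beta)/\sigma_{\omega_2}(\beta)$ is eventually well defined, which is all that the limit inferior and superior see.

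The key step is then the identity
$$\frac{\sigma_{\omega_1}(\beta)}{\sigma_{\omega_2}(\beta)} = \frac{\sigma_{\omega_1}(\beta)/b_1(\beta)}{\sigma_{\omega_2}(\beta)/b_1(\beta)}.$$
Feeding the two-sided estimates above into numerator and denominator shows that, for $b_1(\beta)$ large, this quotient lies in the interval $\big[\tfrac{c_1 - \epsilon}{2}, \tfrac{2}{c_2 - \epsilon}\big]$. Letting $\epsilon \to 0$ gives $\liminf \ge c_1/2$ and $\limsup \le 2/c_2$, with both bounds uniform in $n$ because $c_1, c_2$ were. To finish I would set $a = c_1/4$ and $b = 4/c_2$; since each limit inferior is at most $2$, the constants satisfy $c_1, c_2 \le 2$, hence $c_1 c_2 \le 4 < 16$ and therefore $0 < a < b$, as required.

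The proof carries no serious obstacle: the one point demanding care is that the limit inferior and superior of a quotient do not factor as quotients of the separate limits. The two-sided sandwich above is precisely the device that sidesteps this, and uniformity in $n$ is then inherited for free from the $n$-independence of the constants supplied by Theorem~\ref{linear}.
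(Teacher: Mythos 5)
Your proposal is correct and follows essentially the same route as the paper, which records the corollary as an immediate consequence of Theorem~\ref{linear} applied to $\omega_1$ and $\omega_2$ together with the universal bound $\sigma_\omega(\beta) \leq 2b_1(\beta)$, exactly the sandwich $\frac{\sigma_{\omega_1}(\beta)}{\sigma_{\omega_2}(\beta)} = \frac{\sigma_{\omega_1}(\beta)/b_1(\beta)}{\sigma_{\omega_2}(\beta)/b_1(\beta)}$ you use. Your added care about the eventual positivity of the denominator and the strictness of $a<b$ fills in details the paper leaves implicit.
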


The proof of Theorem~\ref{linear} has two parts, depending on the real part of $\omega$. The case $\text{Re}(\omega)>-\frac{1}{2}$ is a direct consequence of Gambaudo and Ghys' results \cite{GG}. The case $\text{Re}(\omega)<-\frac{1}{2}$ involves a special kind of plumbing operation on positive 3-braids. We would like to stress that our proof breaks down in the limit points $\omega=1$ and $-1$. In particular, our argument does not recover Feller's result for the classical signature invariant $\sigma=\sigma_{-1}$.
For a similar reason, our argument does not imply that the zeros of the Alexander polynomial of random positive braids equidistribute on~$S^1$. This interesting problem remains open.

\section{Gambaudo-Ghys and $Re(\omega)>-\frac{1}{2}$}

In their influential article on braids, Gambaudo and Ghys derived that the restriction of the Levine-Tristram signature function $\sigma_{\omega=e^{2 \pi i \theta}}$ to every fixed braid group $B_k$ with $k \geq 3$ is quasi-linear on the interval $\theta \in (0,\frac{1}{k})$. More precisely, the function $\theta \mapsto \sigma_{\omega=e^{2 \pi i \theta}}(\beta)$ is linear with slope $2 \text{lk}(\beta)$, up to a constant error $k-1$ (see Corollary~4.4. in~\cite{GG} for the case $k=3$, and the remark just before Corollary~4.4. for general $k \geq 3$). In the case of positive braids $\beta \in B_k^+$, the linking number $\text{lk}(\beta)$ coincides with the first Betti number $b_1(\beta)$, up to the same constant error. As a consequence, we obtain the following estimate for all 
$\theta \in (\frac{1}{k+1},\frac{1}{k}):$
$$\liminf \left\{\frac{\sigma_{e^{2 \pi i \theta}}(\beta)}{b_1(\beta)} \middle| \beta \in B_k^+ \right\} > \frac{2}{k+1}.$$
This settles Theorem~\ref{linear} for all $\omega \in S^1 \setminus \bar{\Q}$ with $\text{Re}(\omega)>\text{Re}(\zeta_3)=-\frac{1}{2}$, by Feller's method described in~\cite{F1}:
let the above limit inferior be $\frac{2c}{k+1}$ with $c>1$, and let $\alpha \in B_n^+$, for any $n \geq k$. We can turn $\alpha$ into a finite union of positive $k$-braids~$\bar{\alpha}$, by smoothing a ratio of at most $\frac{1}{k}$ crossings of~$\alpha$. Then, up to a fixed constant error:
\begin{equation*} 
\begin{split}
\sigma_\omega(\alpha) & \geq \sigma_\omega(\bar{\alpha})-\frac{1}{k} b_1(\alpha) \\
 & \geq \frac{2c}{k+1} b_1(\bar{\alpha})-\frac{1}{k} b_1(\alpha) \\
 & \geq \frac{2c}{k+1} \left(1-\frac{1}{k}\right) b_1(\alpha)-\frac{1}{k} b_1(\alpha) \\
 & =\frac{2c(k-1)-(k+1)}{k(k+1)} b_1(\alpha).
\end{split}
\end{equation*}
The last constant is independent of~$n$ and strictly positive, since $c>1$ and $k \geq 3$.

\section{Pentafoil plumbing and $Re(\omega)<-\frac{1}{2}$}

The purpose of this section is to derive a lower bound of the form
$$\liminf \left\{\frac{\sigma_{\omega}(\beta)}{b_1(\beta)} \middle| \beta \in B_3^+ \right\} \geq \frac{c}{2},$$
for all $\omega \in S^1 \setminus \bar{\Q}$ with $\text{Re}(\omega)<-\frac{1}{2}$, with a constant $c>1$ depending on $\omega$. As explained at the end of the previous section, this settles Theorem~\ref{linear} for all $\omega \in S^1 \setminus \bar{\Q}$ with $\text{Re}(\omega)<-\frac{1}{2}$.

We will derive the above estimate by analysing the Garside normal form of braids, which is particularly easy for positive 3-braids~\cite{Ga}. A similar argument was used in the recent work of Tru\"ol, where the concordance invariant $\nu=\Upsilon(1)$ is determined for all 3-braid knots~\cite{Tru}.

\begin{proposition} \label{pentafoil}
Every positive 3-braid $\beta \in B_3^+$ can be obtained from a positive 3-braid $\beta_0 \in B_3^+$, of length at most nine, by successively inserting fourth or higher powers of a single generator, and possibly a second or third power of a single generator at the very end.
\end{proposition}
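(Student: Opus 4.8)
The plan is to read the required decomposition directly off the left-greedy Garside normal form of $\beta$, which on three strands is particularly transparent. Writing $\Delta=\sigma_1\sigma_2\sigma_1$ for the half-twist, every positive $3$-braid has a unique expression $\beta=\Delta^{p}\,s_1s_2\cdots s_r$ in which each factor $s_i$ is one of the four proper simple elements $\sigma_1,\ \sigma_2,\ \sigma_1\sigma_2,\ \sigma_2\sigma_1$, and in which consecutive factors are left-weighted. I would argue by induction on the word length $\ell(\beta)=3p+\sum_i\ell(s_i)$, taking as the base of the induction the finitely many normal forms of length at most nine, and as the inductive step the removal of a single block $\sigma_i^{\,k}$ with $k\ge 4$ in such a way that the result is again a positive $3$-braid of strictly smaller length. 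Reversing these removals then recovers $\beta$ from a short $\beta_0$ by exactly the asserted insertions.

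First I would dispose of the braids carrying many full twists. The full twist $\Delta^2$ is central and satisfies $\Delta^2=\sigma_1^2\sigma_2\sigma_1^2\sigma_2$, from which one checks that $\sigma_1^{\,p}$ is a left divisor of $\Delta^{p}$ for every $p$. Hence, as soon as $p\ge 4$, the braid $\beta$ begins with a block $\sigma_1^4$, which can be stripped off; so by induction we may assume $p\le 3$. For these braids I would then turn to the tail $s_1\cdots s_r$: unwinding the left-weightedness conditions shows that, read as a word in $\sigma_1,\sigma_2$, its syllables strictly alternate and all but possibly the two outermost ones have exponent at least two. Whenever some syllable has exponent at least four, removing a $\sigma_i^4$ from it shortens $\beta$; and if such a removal empties a syllable, its two like-lettered neighbours merge, which may itself expose a fresh long block and allow the reduction to continue.

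The hard part will be the terminal configurations, in which $p\le 3$ and every syllable of the tail has exponent two or three, so that no fourth power is in sight. A priori such a braid can be long, so the entire content of the proposition is the assertion that, once no fourth or higher power can be removed, the normal form already has length at most nine. Establishing this bound is the crux: I would need a careful finite analysis of which left-weighted strings of simple factors, with the twist exponent $p$ restricted to $\{0,1,2,3\}$ and all syllables at most three, are genuinely irreducible, as opposed to being shortenable after first trading the central factor $\Delta^2=\sigma_1^2\sigma_2\sigma_1^2\sigma_2$ against a syllable or rebracketing a product of simple factors into a longer block. I also expect the length parity to enter: since every fourth-or-higher strip removes at least four crossings, a prescribed length-nine model need not be reachable by such strips alone, and the single concluding square or cube of a generator is the correction meant to absorb the residue. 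Pinning down this bound, and with it the precise finite list of base braids, is the step I anticipate to be the main obstacle.
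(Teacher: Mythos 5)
Your reduction moves are sound as far as they go: stripping $\sigma_1^4$ off $\Delta^p$ when $p\ge 4$ (your divisibility claim $\sigma_1^p\preceq\Delta^p$ does follow by induction from $\Delta^2=\sigma_1^2\sigma_2\sigma_1^2\sigma_2$ and centrality), and stripping $\sigma_i^4$ from any syllable of exponent at least four. But the proof collapses exactly where you locate the crux. Your hope that a braid admitting no such removal ``already has length at most nine'' is false: the braids $(\sigma_1^2\sigma_2^2)^n$, and more generally any alternating word $\sigma_1^{a_1}\sigma_2^{a_2}\cdots\sigma_2^{a_{2n}}$ with all $a_i\in\{2,3\}$, have $p=0$, no syllable of exponent four, and unbounded length; the same holds for $\Delta^2w$ and $\Delta^3w$ with such tails $w$. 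So the terminal set of your induction is infinite, and no finite case analysis of irreducible normal forms can close the argument. Your move set is also strictly too weak for a second reason: moderate twist powers interact with short syllables to create fourth powers that your moves never see. For instance $\Delta^3\sigma_1$ has normal form with $p=3$ and a single syllable of exponent one, hence is terminal for you, yet $\Delta^3\sigma_1=\sigma_1\sigma_2^2\sigma_1^2\sigma_2\sigma_1^4$ visibly contains a fourth power.

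The missing idea is the one that drives the paper's proof and explains the statement's final clause, whose role you misread as a parity correction. For an alternating word with all exponents in $\{2,3\}$, remove the \emph{middle} syllable $\sigma_k^{a_n}$ -- a square or cube; its two neighbours $\sigma_j^{a_{n-1}}$ and $\sigma_j^{a_{n+1}}$ then become adjacent and merge into $\sigma_j^{a_{n-1}+a_{n+1}}$ with exponent in $\{4,5,6\}$, a removable fourth-or-higher power. Removing that makes the next pair merge, and the process telescopes outward from the middle until only the last syllable $\sigma_2^{a_{2n}}$ (length at most three, or $\Delta\sigma_2^{a_{2n}}$ when $p=1$) remains. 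Read backwards, $\beta$ is assembled by inserting fourth-or-higher powers, with exactly one square or cube inserted at the very end. The paper combines this with the identities $\Delta\sigma_i=\sigma_j\Delta$, $\Delta^3=\sigma_1\sigma_2^2\sigma_1^2\sigma_2\sigma_1^3$ and $\Delta^4=\sigma_2\sigma_1^2\sigma_2^2\sigma_1^2\sigma_2\sigma_1^4$, which expose a fourth power whenever $N\ge 4$, or $N\ge3$ with some $a_i\ge1$, or $N\ge2$ with some $a_i\ge2$, or $N\ge1$ with some $a_i\ge3$; the only braids escaping every case are $\Delta^3,\Delta^2,\Delta^2\sigma_1,\Delta^2\sigma_2,\Delta^2\sigma_1\sigma_2$, and these, together with the short endpoints of the telescoping, are the genuine base braids of length at most nine.
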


\begin{proof}
Up to conjugation, every positive 3-braid $\beta \in B_3^+$ can be written as
$$\beta=\Delta^N \sigma_1^{a_1} \sigma_2^{a_2} \sigma_1^{a_3} \ldots \sigma_2^{a_{2n}},$$
for some suitable natural numbers $N,a_i \in \N$, all $\geq 2$, except possibly $N$ and $a_1,a_2$, which can both be zero or one. Here $\Delta=\sigma_2 \sigma_1 \sigma_2=\sigma_1 \sigma_2 \sigma_1$ is the square root of the positive generator
$\Delta^2=\sigma_1 \sigma_2^2 \sigma_1 \sigma_2^2=\sigma_2 \sigma_1^2 \sigma_2 \sigma_1^2$ of the centre of~$B_3$. A little computation shows
$$\Delta^3=\sigma_2 \sigma_1^2 \sigma_2^2 \sigma_1 \sigma_2^3=\sigma_1 \sigma_2^2 \sigma_1^2 \sigma_2 \sigma_1^3,$$
$$\Delta^4=\sigma_1 \sigma_2^2 \sigma_1^2 \sigma_2^2 \sigma_1 \sigma_2^4=\sigma_2 \sigma_1^2 \sigma_2^2 \sigma_1^2 \sigma_2 \sigma_1^4.$$

This will be used in order to find high powers of a single generator in positive 3-braids. Indeed, if one of the following 5 conditions is satisfied, then up to conjugation, the braid $\beta$ can be written as a positive braid word that contains the fourth power of a generator. 
\begin{enumerate}
\item $N \geq 4$,
\item $N \geq 3$ and at least one of the $a_i$ is strictly positive,
\item $N \geq 2$ and at least one of the $a_i$ is at least two,
\item $N \geq 1$ and at least one of the $a_i$ is at least three,
\item $N \geq 0$ and at least one of the $a_i$ is at least four.
\end{enumerate}
Here we use the fact that the power $\Delta^N$ can `slide through' the braid $\beta$ to stand right next to the power $\sigma_k^{a_i}$. The above cases cover many, but not all, positive 3-braids; in particular not braids of the form
$$\beta=\sigma_1^{a_1} \sigma_2^{a_2} \sigma_1^{a_3} \ldots \sigma_2^{a_{2n}},$$
with all $a_i$ equal to~$2$ or~$3$. However, in that case, after removing the term $\sigma_k^{a_n}$ near the middle of $\beta$, we find that the two terms on the left and right of $\sigma_k^{a_n}$ merge to a fourth or higher power of a single generator. After removing that, we find another pair of terms that merge to a fourth or higher power of a single generator. We keep iterating this, until we are left with the positive braid $\sigma_2^{a_{2n}}$, whose length is at most three. The same trick allows us to reduce positive 3-braids of the form
$$\beta=\Delta \sigma_1^{a_1} \sigma_2^{a_2} \sigma_1^{a_3} \ldots \sigma_2^{a_{2n}},$$
with all $a_i=2$, to the braid $\Delta \sigma_2^{a_{2n}}$ of length five. The five remaining positive 3-braids not covered by the above arguments are
$\beta=\Delta^3,\Delta^2,\Delta^2 \sigma_1,\Delta^2 \sigma_2, \Delta^2 \sigma_1 \sigma_2$, all of which have length at most~$9$. By inverting the above procedure, we obtain the desired statement on the structure of positive 3-braids.
\end{proof}

Before continuing with the derivation of linear lower bounds on the signature function, we briefly discuss a geometric interpretation of Proposition~\ref{pentafoil}. The closure of a positive braid $\beta \in B_3^+$ that contains both generators $\sigma_1,\sigma_2$ admits a natural fibre surface in $S^3$, which is obtained from a disc by a finite sequence of positive Hopf plumbings~\cite{Sta}. On the level of braids, positive Hopf plumbing simply means adding a positive generator. Similarly, adding the $n$-th power of a generator to a positive 3-braid corresponds to a plumbing of the fibre surface of a torus link of type $T(2,n+1)$. Therefore, Proposition~\ref{pentafoil} implies that the fibre surface of positive 3-braid links admits a higher order plumbing structure, up to an error of small genus. We call this a pentafoil plumbing structure, in allusion to the trefoil plumbing structure derived in~\cite{BD}, since the knot $T(2,5)$ is the pentafoil knot.

Back to Levine-Tristram signatures, let us mention that Proposition~\ref{pentafoil} implies the known bound
$$\liminf \left\{\frac{\sigma(\beta)}{b_1(\beta)} \middle| \beta \in B_3^+ \right\} \geq \frac{1}{2},$$
for the classical signature invariant $\sigma(\beta)=\sigma_{-1}(\beta)$ (see Stoimenow~\cite{Sto} and Feller~\cite{F1}, who extended this to positive 4-braids). Indeed, adding the $n$-th power of a positive generator to a positive 3-braid $\beta \in B_3^+$ increases the first Betti number $b_1(\beta)$ by $n$, and the signature $\sigma(\beta)$ by at least $n-2$, since we add a direct summand of type $A_{n-1}$ to the symmetrised Seifert form of the closure of~$\beta$. We will now upgrade this to a constant $c>\frac{1}{2}$, for all $\omega \in S^1 \setminus \bar{\Q}$ with $\text{Re}(\omega)<-\frac{1}{2}$. At first glance, this seems impossible, since the family of braids $\beta_n=(\sigma_1^2 \sigma_2^2)^n$ satisfies
$$\lim_{n \to \infty}\frac{\sigma(\beta_n)}{b_1(\beta_n)}=\frac{1}{2}.$$
However, for all $\omega \in S^1 \setminus \bar{\Q}$ with $\text{Re}(\omega)<0$, this limit is strictly bigger than $\frac{1}{2}$. In order to see this, we use the following expression for powers of a positive full twist on 3 strands (compare Section~4.3 in~\cite{FK}):
$$\sigma_2(\sigma_1^2 \sigma_2^2)^{n-1}\sigma_1^2 \sigma_2 \sigma_1^{2n}=(\sigma_1 \sigma_2)^{3n}.$$
Up to a small constant error, the signature function of the two families of braids $\sigma_1^n$ and $(\sigma_1 \sigma_2)^{3n}$ is linear, resp. piecewise linear with a breakpoint at $\omega=\zeta_3=e^{\frac{2 \pi i}{3}}$. Therefore, the resulting signature function for the family of braids $\beta_n=(\sigma_1^2 \sigma_2^2)^n$ is also piecewise linear, with a peak (and breakpoint) of $\frac{2}{3} b_1(\beta_n)$ at $\omega=\zeta_3$, sloping down to $\frac{1}{2} b_1(\beta_n)$ between $\omega=\zeta_3$ and $\omega=-1$ (see also Corollary~4.4 in~\cite{GG} for the value of $\sigma_\omega$ at $\omega=\zeta_3$).

For the rest of this section, we fix $\omega \in S^1 \setminus \bar{\Q}$ with $\text{Re}(\omega)<-\frac{1}{2}$. An inspection of the Levine-Tristram signature function of 2-strand torus links reveals that  $\sigma_\omega(T(2,n))$ is maximal for $n \leq 6$. Adding the $n$-th power of a positive generator to a positive braid $\beta \in B_3^+$ increases the Betti number $b_1(\beta)$ by $\Delta b_1=n$ and the signature $\sigma_\omega(\beta)$ by $\Delta \sigma_\omega$. The assumption $\text{Re}(\omega)<-\frac{1}{2}$ implies

\begin{enumerate}
\item[(i)] $\frac{\Delta \sigma_\omega}{\Delta b_1} \geq \frac{1}{2}$, if $n=4$,

\smallskip
\item[(ii)] $\frac{\Delta \sigma_\omega}{\Delta b_1} \geq \frac{3}{5}$, if $n=5$,

\smallskip
\item[(iii)] $\frac{\Delta \sigma_\omega}{\Delta b_1} \geq \frac{4}{6}$, if $n=6$.
\end{enumerate}

\smallskip
Now let $\beta \in B_3^+$ be a positive braid with Garside normal form
$$\beta=\Delta^N \sigma_1^{a_1} \sigma_2^{a_2} \sigma_1^{a_3} \ldots \sigma_2^{a_{2n}},$$
as above. If $N \geq 5$, then $\beta$ contains the $5$-th power of $\sigma_1$, since
$$\Delta^5=\sigma_1 \sigma_2^2 \sigma_1^2 \sigma_2^2 \sigma_1^2 \sigma_2 \sigma_1^5.$$
Removing the term $\sigma_1^5$ from $\beta$, we obtain a positive braid $\bar{\beta} \in B_3^+$ with smaller first Betti number. Moreover, passing from $\bar{\beta}$ to $\beta$ increases the ratio $\frac{\sigma_\omega(\bar{\beta})}{b_1(\bar{\beta})}$, provided the latter was smaller than $\frac{3}{5}$. Therefore, in order to derive a limit inferior bigger than $\frac{1}{2}$, we only need to consider positive braids $\beta$ with $N \leq 4$. Morevover, since we are not interested in bounded deviations, we may as well completely forget about the term $\Delta^N$ and assume
$$\beta=\sigma_1^{a_1} \sigma_2^{a_2} \sigma_1^{a_3} \ldots \sigma_2^{a_{2n}},$$
with all $a_k \in \{2,3,4\}$. The last assumption is again justified by the discussion on adding the $5$-th generator of a generator to a positive 3-braid. We already dealt with the special case where all the coefficients are two:
$$\liminf \left\{\frac{\sigma_\omega((\sigma_1^2 \sigma_2^2)^n)}{b_1((\sigma_1^2 \sigma_2^2)^n)} \right\}=c_\omega > \frac{1}{2}.$$
In particular, there exists $M \in \N$, depending on $\omega$, with the following property: adding $(\sigma_1^2 \sigma_2^2)^m$ to a positive 3-braid, with $m \geq M$, changes the ratio $\frac{\sigma_\omega}{b_1}$ towards $c_\omega > \frac{1}{2}$.
Therefore, we may assume that the coefficients $a_1,a_2,\ldots,a_{2n}$ of our braid $\beta \in B_3^+$ do not contain a sequence of more than $M$ consecutive $2$'s.

Next, let us suppose $a_k \in \{2,3\}$, for all $k \leq 2n$. Then the procedure described just before Proposition~\ref{pentafoil} shows that, up to a fixed bounded error, $\beta$ is obtained from the empty braid by successively adding $4$-th, $5$-th, or $6$-th powers of positive generators. The proportion of coefficients $a_k=3$ being at least $\frac{1}{M}$, this provides a constant limit inferior strictly bigger than $\frac{1}{2}$ for the ratio $\frac{\sigma_\omega(\beta)}{b_1(\beta)}$ of all these braids, since adding the $5$-th or $6$-th power of a generator improves this ratio towards a value~$\geq \frac{3}{5}$.

Finally, let us discuss the case when one of the coefficients $a_k$ is four. After a suitable conjugation of $\beta$, we may assume $a_n=4$.

If $a_{n-1}+a_{n+1}=7$ or~$8$, then we reduce the braid $\beta$ to the positive braid represented by the sequence
$$a_1,\ldots,a_{n-2},2,a_{n+2},\ldots,a_{2n},$$
by removing first a $4$-th power, and then a $5$-th or $6$-th power of a generator. The latter improves the ratio $\frac{\sigma_\omega(\beta)}{b_1(\beta)}$ towards a value~$\geq \frac{3}{5}$.

If $a_{n-1}+a_{n+1}=5$ or~$6$, then we reduce the braid $\beta$ to the positive braid represented by the sequence
$$a_1,\ldots,a_{n-3},a_{n-2}+a_{n+2},a_{n+3},\ldots,a_{2n},$$
again by removing first a $4$-th power, and then a $5$-th or $6$-th power of a generator. If $a_{n-2}+a_{n+2} \geq 5$, we keep iterating this process, until we are left with a sequence with all coefficients $\leq 4$. Otherwise, $a_{n-2}+a_{n+2} \geq 4$, and we proceed as in the last step, just below.

If $a_{n-1}+a_{n+1}=4$, then again, we reduce the braid $\beta$ to the positive braid represented by the sequence
$$a_1,\ldots,a_{n-3},a_{n-2}+a_{n+2},a_{n+3},\ldots,a_{2n}.$$
However, in this case, we remove two times the $4$-th power of a generator, which does not a priori work in favour of a ratio $\frac{\sigma_\omega(\beta)}{b_1(\beta)}>\frac{1}{2}$. In the unfortunate case when $a_{n-2}+a_{n+2}=4$, $a_{n-3}+a_{n+3}=4$, etc., we keep removing $4$-th powers of a generator, which does not allow any simple conclusion. One more time, it is the upper bound~$M$ on the number of consecutive $2$'s in the initial sequence $a_1,\ldots,a_{2n}$ that comes to rescue in this case: after removing at most $M$ $4$-th powers of a generator, there must eventually appear a pair of coefficients $a_{n-l},a_{n+l}$ with $l \leq M$ and $a_{n-l}+a_{n+l} \geq 5$. As a consequence, the braid~$\beta$ is obtained from the empty braid by adding $4$-th, $5$-th, and $6$-th powers of positive generators, with a fixed lower bound on the proportion of the latter two. This yields the desired lower bound $>\frac{1}{2}$ on the limit inferior of the ratio $\frac{\sigma_\omega}{b_1}$ on $B_3^+$.

\bigskip
\noindent
\texttt{sebastian.baader@math.unibe.ch}

\end{document}